\documentclass[12 pt]{amsart}
\title{SOBRE ECUACIONES DIFERENCIALES Y CALCULO DE VARIACIONES EN LOS TRABAJOS DE L. EULER}
\author{Jonathan Taborda}
%\address{Universidad de Antioquia,A.A. 1226, Facultad de Ciencias Exactas y Naturales}
\email{taborda50@gmail.com}
\date{Mayo 22 de 2007}
\usepackage{amsmath,amssymb,amsthm,pb-diagram,mathpazo,amsrefs}
\usepackage[activeacute,spanish]{babel}
\usepackage[T1]{fontenc}
\setlength{\textheight}{235mm}
\setlength{\textwidth}{180mm}
\setlength{\oddsidemargin}{-1cm}
\setlength{\evensidemargin}{-1cm}
\setlength{\topmargin}{-40pt}

\newtheorem{lem}{Lema}
\dedicatory{A Leonhard Euler, con motivo de su tricentenario 1707-2007}
\keywords{}
\subjclass[2000]{00A30, 01A05, 58A05}
\begin{document}\maketitle
\begin{abstract}
En las siguientes notas, se pretende realizar una pequeña digresión, relativa a los tópicos mencionados en el título de la misma; puesto que tales no fueron abordados en el pasado Homenaje realizado por el Instituto de Física, de la U de A. Consideramos más que perogrullo tratar de justificar la importancia y vigencia en el desarrollo de las Matemáticas y Física a lo largo del Siglo XVIII como en la actualidad de tales temas; por ende presentamos una breve descripción de los métodos y problemas atacados por Euler y sus contemporáneos empleando dicha Heurística.\\
De antemano señalamos que éstas, constituyen un intento vehementemente pauperrimo, para honrar la memoria de quien es considerado el Shakespeare de las Matemáticas: $\textit{Universal, rico en detalles e inagotable}.$\\\\
\end{abstract}
\fontfamily{ppl}\selectfont
\section{Las Ecuaciones Diferenciales Ordinarias en el Siglo XVIII.}
\subsection{EDO de primer orden.}
Los primeros trabajos en ED, como ocurrió con el cálculo infinitesimal a finales del Siglo XVII y comienzos del Siglo XVIII, vieron primero la luz en cartas de un matem\'{a}tico a otro, muchas de las cuales ya no est\'{a}n disponibles, o en publicaciones que a menudo repiten los resultados establecidos o reivindicados en cartas.\\\\
 En las $\textit{Actas Eruditorum}$ de 1693$\footnotemark\footnotetext{{\OE}uvres, 10, 512-514.}$ Huygens explícitamente habla de ED, y Leibnitz, en otro artículo de la misma revista y año$\footnotemark\footnotetext{Math.Schriften, 5, 306.}$ dice que las ED son funciones de elementos de triángulo característico.\\\\
Lo primero que normalmente aprendemos de las ED que aparecen al eliminar las constantes arbitrarias entre una función dada y sus derivadas, no se hizo hasta 1740, aproximadamente y se debe a Alexis Fontaine del Bertins.\\\\
Jacques Bernoulli fue de los primeros en utilizar el cálculo infinitesimal para resolver problemas de EDO; en mayo de 1690$\footnotemark\footnotetext{Acta Erud., 1690, 217-219=Opera,1, 421-424.}$ publicó su solución al problema de la Isócrona, si bien Leibnitz ya había dado una solución analítica; este problema consiste en encontrar una curva a lo largo de la cual un péndulo tarde el mismo tiempo en efectuar una oscilación completa, sea grande o pequeño el arco que recorre; la ecuaci\'{o}n, en los símbolos de Bernoulli, era \begin{gather}dy\sqrt{b^2y-a^3}=dx\sqrt{a^3}\end{gather}\\\\
Bernoulli concluía de la igualdad de diferencias que las integrales (la palabra se utilizaba por vez primera) han de ser iguales y dio como solución \begin{gather}\dfrac{2b^2y-2a^3}{3b^2}\sqrt{b^2y-a^3}=x\sqrt{a^3}\end{gather}\\\\
La curva es, por supuesto, la cicloide.\\\\
En el mismo art\'{i}culo de 1690, Jacques Bernoulli planteó el problema de encontrar la curva que adopta una cuerda flexible e inextensible colgada libremente de dos puntos fijos, la curva que Leibnitz denominó catenaria.\\\\
En las $\textit{Acta}$ de junio de 1691, Leibnitz, Huygens y J. Bernoulli publicaron soluciones independientes.\\\\
Leibnitz descubrió la técnica de separación de variables y la comunicó en una carta de 1691 a Huygens; resolvió así una ecuación de la forma $y(dx/dy)=f(x)/g(y),$ escribiendo $dx/f(x)=g(y)/d(y)$ y consiguió integrar ambos miembros; no formuló el método general.\\\\
En 1694, Leibnitz y J. Bernoulli introdujeron el problema de encontrar la curva o familia de curvas que cortan con un ángulo dado a una familia de curvas dadas; J. Bernoulli trayectorias a las curvas secantes.\\\\
También fueron identificadas las ED de primer orden exactas, i.e, las ecuaciones $M(x,y)dx+N(x,y)dy=0,$ para las cuales $Ndx+Ndy$ es la diferencia exacta de una función $z=f(x,y).$ Clairaut, célebre por su trabajo sobre la forma de la Tierra, había dado la condición $\partial M/\partial y=\partial N/\partial x$ para que la ecuación fuese exacta en sus artículos de 1739 y 1740, condición que también fue dada independientemente por Euler en un artículo escrito en 1734-35$\footnotemark\footnotetext{Comm. Acad. Sci. Petrop., 7, 1734/35, 174-193, pub. 1740=Opera, (1), 22, 36-56.}$\\\\
\subsection{Soluciones Singulares.}
Las soluciones singulares no se obtienen de la solución  general dando un valor concreto a la constante de integración; i.e, no son soluciones particulares.\\\\
Clairaut y Euler habían desarrollado un método para hallar la solución singular a partir de la propia ecuación, a saber, eliminando $y'$ de $f(x,y,y')=0$ y $\partial f/\partial y'=0.$\\\\
Este hecho y el de que las soluciones singulares no estén contenidas en la solución general intrigaron a Euler; en sus $\textit{Institutions}$ de 1768$\footnotemark\footnotetext{Volumen I, p\'{a}gs. 393 y ss.}$ dio un criterio para distinguir cuando no se conocía la solución general, criterio que fue mejorado por D'Alembert$\footnotemark\footnotetext{Hist. de l'Acad. des. Sci. Paris, 1768, 85 y ss., pub. 1772.}$.\\\\
\section{Ecuaciones de segundo orden.}
Las ED de segundo orden aparecen en problemas físicos ya en 1691. J. Bernoulli se planteó el problema de la forma de una vela bajo la presión del viento, el problema de la velaria, lo que le llevó a la ecuación de segundo orden $d^2x/ds^2=(dy/ds)^3,$ donde s es la longitud de arco.\\\\
Tales ecuaciones aparecen a continuaci\'{o}n al tocar el problema de determinar el perfil de una cuerda el\'{a}stica que vibra, sujeta por los extremos-v.g una cuerda de violin.\\\\
Euler comenz\'{o} a considerar ED de segundo orden en 1728. Su inter\'{e}s en ellas fue suscitado en parte por sus trabajos en mec\'{a}nica; $\footnotemark\footnotetext{Cr. C.H.Truesdell. \textit{Ensayos sobre Historia de la Mec\'{a}nica}}$ hab\'{i}a trabajado, V.g, sobre el movimiento del p\'{e}ndulo en medios con rozamiento, lo que conduce a Ec. de segundo orden; trabaj\'{o} para el Rey de Prusia sobre el efecto de la resistencia del aire sobre los proyectiles; en esto tom\'{o} el trabajo del Ingl\'{e}s Benjamin Robins, lo mejor\'{o} y escribi\'{o} una versi\'{o}n en alem\'{a}n (1745), la cual fu\'{e} traducida al franc\'{e}s y al ingl\'{e}s e utilizada en artiller\'{i}a.\\\\
Consider\'{o} tambi\'{e}n una clase de Ec. de segundo orden que redujo mediante un cambio de variables a Ec. de primer orden. Consider\'{o}, V.g., la ecuaci\'{o}n \begin{gather}ax^mdx^p=y^pdy^{p-2}d^2y\end{gather} \'{o} en la forma de derivadas, \begin{gather}\Biggl(\dfrac{dy}{dx}\Biggr)^{p-2}\dfrac{d^2y}{dx^2}=\dfrac{ax^m}{y^n}\end{gather}\\\\
Euler introdujo las nuevas variables $t$ y $v$ por medio de las ecuaciones \begin{gather}y=e^vt(v),\hspace{0.5cm} x=e^{\alpha v}\end{gather}\\\\
donde $\alpha$ es una constante a determinar.\\\\
Las ecuaciones (5) se pueden contemplar como las Ec. param\'{e}tricas de $x$ e $y$ en t\'{e}rminos de $v,$ de modo que se pueden calcular $dx/dy$ y $d^2/dx^2$ y, sustituyendo en (4), obtener una Ec. de segundo orden en $t$ como funci\'{o}n de $v.$ Euler fija entonces $\alpha$ de modo que quede eliminado el factor exponencial y $v$ ya no aparezca expl\'{i}citamente; una nueva transformaci\'{o}n, a saber, $z=dv/dt,$ reduce la Ec. de segundo orden a una de primero.\\\\
  Despu\'{e}s de haber abordado el tema de los sonidos musicales en el libro\\ $\textit{Tentamen Novae Theoriae Musicae ex Certissimis Harmoniae Principiis Dilucide Expositae}$ (Investigaci\'{o}n sobre una nueva teor\'{i}a de la m\'{u}sica, claramente expuesta a partir de incontestables principios de la armon\'{i}a), escrito antes de 1731 y publicado en 1739 $\footnotemark\footnotetext{Opera, (3),1,197-427}$, Euler prosigui\'{o} el trabajo de Daniel Bernoulli, salvo que los argumentos matem\'{a}ticos de \'{e}ste son m\'{a}s claros. Para una forma de cadena continua, a saber, el caso especial en que el peso es proporcional a $x^n,$ Euler tuvo que resolver \begin{gather}\dfrac{x}{n+1}\frac{d^2y}{dx^2}+\dfrac{dy}{dx}+\dfrac{y}{\alpha}=0\end{gather}\\\\
  Obtiene la soluci\'{o}n en serie que en notaci\'{o}n moderna est\'{a} dada por $\footnotemark\footnotetext{Para v arbitrario (incluyendo valores complejos):\\
  $I_v(z)=\sum\limits_{n=0}^{\infty}\dfrac{(z/2)^{v+2n}}{n!\Gamma(v+n+1)}$.\\
  Los $I_v(z)$ reciben el nombre de funciones Bessel modificadas.}$ \begin{gather}y=Aq^{-n/2}I_n(2\sqrt{q}),\hspace{0.5cm} q=-\dfrac{(n+1)x}{\alpha}\end{gather}\\\\
La $n$ es aqu\'{i} general, por lo que Euler est\'{a} introduciendo funciones de Bessel de \'{i}ndice real arbitrario. Da tambi\'{e}n la soluci\'{o}n en forma integral \begin{gather}y=A\dfrac{\int_0^1{(1-t^2)}^{(2n-1)/2}\cosh\Biggl(2t\sqrt{\dfrac{(n+1)x}{\alpha}}\Biggr)dt}{\int_0^1{(1-\tau^2)}^{(2n-1)/2}d\tau}\end{gather}\\\\
Este es quiz\'{a} el primer caso de soluci\'{o}n de una ED de segundo orden expresada como integral.\\\\
En un art\'{i}culo de 1739$\footnotemark\footnotetext{Comm. Acad. Sci. Petrop., 1739, 128-149, publi. 1750=Opera, (2), 10, 78-97}$ Euler se ocup\'{o} de las ED del oscilador arm\'{o}nico, $\ddot{x}+Kx=0$, y de las oscilaciones forzadas del mismo \begin{gather}M\ddot{x}+Kx=F\sin w_\alpha t\end{gather}\\\\
obtuvo las soluciones por cuadraturas y descubri\'{o} (redescubri\'{o}, en realidad, ya que otros lo hab\'{i}an encontrado antes) el fen\'{o}meno de resonancia; a saber, que si $w$ es la frecuencia natural $\sqrt{K/M}$ del oscilador, que se obtiene cuando $F=0,$ entonces, cuando $w_\alpha/w$ se aproxima a 1, la oscilaci\'{o}n forzada tiene amplitud cada vez m\'{a}s grande y se hace infinita.\\\\
En 1760, Euler $\footnotemark\footnotetext{Novi Comm. Acad. Sci. Petrop., 9, 1760/61, publi. 1763=Opera, (1), 22, 334-394, y 9, 1762/63, 154-159, publi. 1764=Opera, (1), 22, 403-420}$ consider\'{o} la ecuaci\'{o}n de Riccati \begin{gather}dz/dx+z^2=ax^n\end{gather}\\
y demostr\'{o} que si se conoce una integral particular $v,$ entonces la transformaci\'{o}n \begin{gather}z=v+u^{-1}\end{gather}\\
convierte a aquella en una ecuaci\'{o}n lineal. Adem\'{a}s, si se conocen dos integrales particulares, la integraci\'{o}n de la ecuaci\'{o}n original se reduce a cuadraturas.\\\\
\subsection{Ecuaciones de Orden Superior.}
En diciembre de 1734, Daniel Bernoulli escribi\'{o} a Euler, quien estaba en San Petersburgo, que hab\'{i}a resuelto el problema del desplazamiento transversal de una barra el\'{a}stica (un cuerpo unidimensional de madera o de acero) fijada a una pared en uno de sus extremos y libre el otro. Bernoulli hab\'{i}a obtenido la ED \begin{gather}K^4\dfrac{d^4y}{dx^4}=y\end{gather}\\
donde $K$ es una constante, $x$ es la distancia desde el extremo libre de la barra e $y$ el desplazamiento vertical en ese punto respecto a la posici\'{o}n sin pandeo de la barra.\\\\
Euler, en una r\'{e}plica escrita antes de junio de 1735, afirm\'{o} que el tamb\'{e}n descubierto esta ecuaci\'{o}n y que no era capaz de integrarla salvo utlizando series, y que hab\'{i}a obtenido cuatro series distintas; estas series representaban funciones circulares y exponenciales; pero Euler no lo v\'{i}o entonces.\\\\
  Cuatro años m\'{a}s tarde, en una carta a J. Bernoulli (15 de Septiembre de 1739), Euler indicaba que su soluci\'{o}n se pod\'{i}a representar como \begin{gather}y=A\Biggl[\bigl(\cos x/K + \cosh x/K\bigr)-\dfrac{1}{b}\bigl(\sin x/K
+ \sinh x/K\bigr)\Biggr]\end{gather}\\
donde $b$ est\'{a} determinado por la condici\'{o}n $y=0$ cuando $x=l,$ de modo que \begin{gather}b=\dfrac{\sin l/K + \sinh l/K}{\cos l/K + \cosh l/K}\end{gather}\\
Los problemas de elasticidad condujeron a Euler a considerar el problema matem\'{a}tico de la resoluci\'{o}n de Ec. lineales generales con coeficientes constantes, y en una carta a Jean Bernoulli de septiembre de 1739 escribe que hab\'{i}a tenido \'{e}xito.\\\\
Bernoulli le respondi\'{o} afirmando que \'{e}l ya hab\'{i}a considerado tales ecuaciones en 1700 incluso con coeficientes variables.\\\\
En la publicaci\'{o}n de su trabajo $\footnotemark\footnotetext{Misc. Berolin., 7, 1743, 193-242=Opera, (1), 22, 108-149}$, Euler consider\'{o} la ecuaci\'{o}n \begin{gather}0=Ay+B\dfrac{dy}{dx}+C\dfrac{d^2y}{dx^2}+D\dfrac{d^3y}{dx^3}+\ldots+L\dfrac{d^ny}{dx^n}\end{gather}\\
donde los coeficientes son constantes; la ecuaci\'{o}n se dice que es homogenea porque el t\'{e}rmino independiente de $y$ y sus derivadas es 0. Euler indica que la soluci\'{o}n general ha de contener $n$ constantes arbitrarias y que su soluci\'{o}n vendr\'{a} dada por la suma de $n$ soluciones particulares, cada una de ellas multiplicada por una constante; hace entonces la sustituci\'{o}n \begin{gather}y=\exp\bigl[\int r dx\bigr]\end{gather}\\
con $r$ constante, y obtiene la ecuaci\'{o}n en $r$ \begin{gather}A+Br+Cr^2+\ldots+Lr^n=0\end{gather}\\
que se denomina ecuaci\'{o}n caracter\'{i}stica o auxiliar. Cuando $q$ es una ra\'{i}z real simple de esta ecuaci\'{o}n, entonces \begin{gather}ae^{\int qdx}\end{gather}\\
es una soluci\'{o}n de la ED original. Si la ecuaci\'{o}n caracter\'{i}stica tiene una ra\'{i}z m\'{u}ltiple $q$, Euler hace $y=e^{qx}u(x)$ y sustituye en la ED; obteniendo que \begin{gather}y=e^{qx}\bigl(\alpha+\beta x+\gamma x^2+\ldots\ldots+\varsigma x^{k-1}\bigr)\end{gather}\\
es una soluci\'{o}n que contiene $k$ constantes arbitrarias si $q$ aparece $k$ veces como ra\'{i}z de la ecuaci\'{o}n caracter\'{i}stica. Trat\'{o} tambi\'{e}n los casos de ra\'{i}ces complejas conjugadas y de ra\'{i}ces complejas m\'{u}ltiples, con lo que Euler resuelve completamente las Ec. lineales homogeneas con coeficientes constantes.\\\\
  Algo m\'{a}s tarde $\footnotemark\footnotetext{Novi Comm. Acad. Sci. Petrop., 3, 1750/51, 3-35, publi. 1753=Opera(1), 22, 181-213}$ estudi\'{o} la EDO lineal de orden $n$ no homogenea; su m\'{e}todo consisti\'{o} en multiplicar la Ec. por $e^{\alpha x}$, integrar ambos miembros y proceder a determinar $\alpha$ de modo que la Ec. se reduzca a una de orden inferior. As\'{i}, V.g, para resolver \begin{gather}C\dfrac{d^2y}{dx^2}+B\dfrac{dy}{dx}+Ay=X(x),\end{gather}\\
  multiplicada por $e^{\alpha x}dx$ y obtiene \begin{gather}\int\Biggl[e^{\alpha x}C\dfrac{d^2y}{dx^2}+e^{\alpha x}B\dfrac{dy}{dx}+e^{\alpha x}Ax\Biggr]dx=\int e^{\alpha x}X dx\end{gather}\\
Pero, para $A',b'$ y $\alpha$ apropiados, el primer miembro ha de ser \begin{gather}e^{\alpha x}\bigl(A'y+B'dy/dx\bigr)\end{gather}\\
Derivando esta expresi\'{o}n y comparando con la ecuaci\'{o}n original Euler obtiene que \begin{gather}B'=C, A'=B-\alpha C, A'=A/\alpha\end{gather}\\
con lo cual, de las \'{u}ltimas ecuaciones, \begin{gather}A-B\alpha+C\alpha^2=0\end{gather}\\
Quedando, pues, determinados $\alpha, A'$ y $B'$ y la ecuaci\'{o}n original se reduce a \begin{gather}A'y+B'dy/dx=e^{-\alpha x}\int e^{\alpha x}X dx\end{gather}\\
Un factor integrante de esta ecuaci\'{o}n es $e^{\beta x}dx,$ donde $\beta=A'/B',$ de modo que por (23), se tiene $\alpha\beta=A/C$ y $\alpha+\beta=B/C$ y consecuentemente, por (24), $\alpha$ y $\beta$ son las ra\'{i}ces de $A-B\alpha+C\alpha^2=0.$\\\\
\section{el c\'{a}lculo de variaciones en siglo XVIII.}
\subsection{La teor\'{i}a de Superficies}
Como la teor\'{i}a de curvas en el espacio, la teor\'{i}a de superficies tuvo un inicio lento. Empez\'{o} con el estudio de las geod\'{e}sicas sobre superficies, con las geod\'{e}sicas sobre la Tierra como precauci\'{o}n principal. En el $\textit{Journal des Scavans}$ de 1697, J. Bernoulli propuso el problema de encontrar m\'{i}nimo entre dos puntos sobre una superficie convexa $\footnotemark\footnotetext{Opera, 1, 204-05.}$ Le escribi\'{o} a Leibnitz en 1698 para señalarle que el plano osculador (el plano del c\'{i}rculo osculador) en cualquier punto de una geod\'{e}sica es penperdicular a la superficie en ese punto. En 1698 Jacques Bernoulli resolvi\'{o} el problema de las geod\'{e}sicas sobre cilindros, conos y superficies de revoluci\'{o}n. El m\'{e}todo era limitado, a pesar de que en 1728 Jean Bernoulli $\footnotemark\footnotetext{Opera, 4, 108-128.}$ tuvo \'{e}xito con el m\'{e}todo y encontr\'{o} las geod\'{e}sicas sobre otros tipos de superficies.\\\\
En 1728 Euler $\footnotemark\footnotetext{Comon. Acad. Sci. Petrop. 3, 1728, 110-124, pub. 1732=Opera(1), 25, 1-12.}$ proporcion\'{o} las ED para las geod\'{e}sicas sobre superficies. Euler us\'{o} el m\'{e}todo que hab\'{i}a introducido en el c\'{a}lculo de variaciones. En 1760, en su \\
$\textit{Recherches sur courbore des surfaces}$ (Investigaciones sobre la curvatura de las superficies) $\footnotemark\footnotetext{M\'{e}m. de. l'Acad. de Berlin, 16, 1760, 119-143, publ. 1767=Opera(1), 28. 1-22.}$, Euler estableci\'{o} la teor\'{i}a de superficies. Este trabajo es la contribuci\'{o}n m\'{a}s importante de Euler a la geometr\'{i}a diferencial y un punto culminante de la materia. Euler representa una superficie por $z=f(x,y)$ e introduce la actual notaci\'{o}n \begin{gather}p=\dfrac{\partial z}{\partial x}, q=\dfrac{\partial z}{\partial y}, r=\dfrac{\partial^2 z}{\partial x^2}, s=\dfrac{\partial^2 z}{\partial x\partial y}, t=\dfrac{\partial^2 z}{\partial x^2}\end{gather}\\\\
M\'{a}s adelante dice:
\begin{quote}
 $\textit{Empiezo por determinar el radio de curvatura de cualquier secci\'{o}n  plana}$\\
 $\textit{de una superficie; entonces aplico esta soluci\'{o}n a secciones que son perpendiculares a la }$\\ $\textit{superficie en cualquier punto dado, comparo los radios de curvatura de estas secciones }$\\ $\textit{con respecto a su mutua inclinaci\'{o}n, lo que nos coloca en situaci\'{o}n de establecer una}$\\$\textit{idea adecauda de la curvatura de superficies}.$
 \end{quote}
\subsection{Los primeros trabajos de Euler.}
En 1728, Jean Bernoulli propuso a Euler el problema de obtener geod\'{e}sicas sobre superficies aplicando la propiedad de que los planos osculadores de las geod\'{e}sicas cortan la superficie en \'{a}ngulos rectos. Este problema inici\'{o} a Euler en el c\'{a}lculo de variaciones. Lo resolvi\'{o} en 1728 $\footnotemark\footnotetext{\'{I}bid. nota 17.}$ En 1734 Euler generaliz\'{o} el problema de la braquist\'{o}crona para minimizar cantidades diferentes de tiempo, y tomando en cuenta un medio resistente $\footnotemark\footnotetext{Comm. Acad. Sci. Petrop, 7, 1734/35, 135-149. Opera(1), 25, 41-53.}$\\\\
M\'{a}s adelante, Euler se propuso encontrar una aproximaci\'{o}n m\'{a}s general a problemas en este terreno. Su m\'{e}todo, que fue una simplificaci\'{o}n del de Jacques Bernoulli; consisti\'{o} en reemplazar la integral de un problema por una suma y reemplazar las derivadas en el integrando por coeficientes diferenciales, haciendo la integral una funci\'{o}n de n\'{u}mero finito de ordenadas del arco $y(x).$ M\'{a}s adelante, vari\'{o} uno o m\'{a}s de las ordenadas seleccionadas arbitrariamente y calcul\'{o} la variaci\'{o}n en la integral. Igualando la variaci\'{o}n de la integral a cero y usando un proceso de paso al l\'{i}mite muy tosco para transformar las ED resultantes, obtuvo la ED que deb\'{i}a ser satisfecha por el arco minimizante.\\\\
Por el m\'{e}todo descrito con anterioridad, aplicado a integrales de la forma \begin{gather}J=\int_{x_1}^{x_2}f(x,y,y')dx\end{gather}\\ Euler tuvo \'{e}xito al demostrar que la funci\'{o}n $y(x)$ que minimiza o maximiza el valor de $J$ debe satisfacer la EDO \begin{gather}f_y-\dfrac{dy}{dx}(f_{y'})=0\end{gather}\\\\
Consideremos ahora una curva regular $\mathcal{G}$ sobre una superficie y que pasa por dos puntos A y B. Sea $F(t,x^1,x^2,x^3,\dot{x}^1,\dot{x}^2,\dot{x}^3)$ una funci\'{o}n con derivadas parciales en un cierto abierto de $\mathbb{R}^7$. Consideremos  \[\int_{t_0}^{t_1}F(t,x^i,\dot{x}^i)dt\]\\\\
Podemos pensar en la siguiente funci\'{o}n: \[\begin{diagram}\node{J:\mathcal{G}}\arrow{e,t}\\
\node{\mathbb{R}}
\end{diagram}\]
\[\begin{diagram}
\node[2]{x^i(t)}\arrow{e,t}\\
\node{J\bigl(x^i(t)\bigr)=\int_{t_0}^{t_1}F(t,x^i,\dot{x}^i)dt}\end{diagram}\]\\
$\mathcal{G}$ es el cojunto de curvas regulares que unen A con B. La funci\'{o}n $J$ se denomina un funcional.\\\\
Nos interesan las curvas $x^i(t)$ que hagan m\'{i}nima a  $J$. Este problema es un problema del c\'{a}lculo Variacional.\\\\
Sea $x^i(t)$ una curva que hace m\'{a}x(m\'{i}nima) a \[\int_{t_o}^{t_1}F(t,x^i,\dot{x}^i)dt\] Esa curva se llamar\'{a} una curva extremal del funcional $J.$\\\\
Sea
Veamos que la curva de Ecuaciones param\'{e}tricas $\xi^i(t)$ pasa por A y B.\\
$$\xi^i(t_0)=x^i(t_0)+\varepsilon\mathcal{H}^i(t_0)=T^{-1}(a^i)\hspace{0.5cm}\therefore T\bigl(\xi^i(t_0)\bigr)=a^i.$$\\
igualmente se prueba para $b^i.$\\\\
Sea \[J\bigl(\xi^i(t)\bigr)=\int_{t_0}^{t_1}F(t,\xi^i,\dot{\xi}^i)dt=J(\varepsilon)\]\\
\[=\int_{t_o}^{t_1}F(t,x^i+\varepsilon\mathcal{H}^i,\dot{x}^i+\varepsilon\dot{\mathcal{H}^i})dt\]\\
\[J(0)=J\bigl(x^i(t)\bigr)=\int_{t_0}^{t_1}F(t,x^i,\dot{x}^i)dt\]\\
\[J(\varepsilon)-J(0)=\int_{t_0}^{t_1}F(t,x^i+\varepsilon\mathcal{H}^i,\dot{x}^i+\varepsilon\dot{\mathcal{H}^i}dt)-\int_{t_0}^{t_1}F(t,x^i,\dot{x}^i)dt\]\\
\begin{gather}=\int_{t_0}^{t_1}\bigl\{F(t,x^i+\varepsilon\mathcal{H}^i,\dot{x}^i+\varepsilon\dot{\mathcal{H}^i})-F(t,x^i,\dot{x}^i)\bigr\}dt\end{gather}\\
Ahora $F$ es una funci\'{o}n de $(t,x^1,x^2,x^3,\dot{x}^1,\dot{x}^2,\dot{x}^3)$\\\\
Para estimar la expresi\'{o}n (29) debemos recordar la F. de Taylor de $2^{do}$-orden.\\
Sea $\begin{diagram}\node[2]{f:B_{(a)}\subset\mathbb{R}^n}\arrow{e,t}\\
\node{\mathbb{R}}\end{diagram}$\\
$\begin{diagram}\node[2]{x}\arrow{e,t}\\
\node{f(x)=f(x^1,\ldots,x^n)}\end{diagram}$\\\\
un campo escalar cuyas derivadas parciales mixtas $\dfrac{\partial f}{\partial x^i\partial y^j}, i,j=1,2,\ldots,n$ existen y son continuas en la $B_{(a)}.$\\
Entonces $\forall y\in\mathbb{R}^n$ tal que $a+y\in B_{(a)},$ si consideramos el siguiente conjunto $(a,a+y),\exists\, 0<c<1$ tal que $$f(a+y)=f(a)+\langle\nabla f(a),y\rangle+\frac{1}{2}y^tH(a+cy)y$$\\\\
donde $$H(a+cy)=\begin{pmatrix}\dfrac{\partial^2f}{\partial x^1\partial x^1}&\dfrac{\partial^2f}{\partial x^2\partial x^1}&\ldots\ldots&\dfrac{\partial^2f}{\partial x^n\partial x^1}\\
\vdots&\vdots&&\vdots\\
\dfrac{\partial^2f}{\partial x^1\partial x^n}&\dfrac{\partial^2f}{\partial x^2\partial x^n}&\ldots\ldots&\dfrac{\partial^2f}{\partial x^n\partial x^n}\end{pmatrix}_{a+cy}$$\\\\
En nuestro caso $$F(t_{+0},x^i+\varepsilon\mathcal{H}^i,\dot{x}^i+\varepsilon\dot{\mathcal{H}^i})=$$\\
\[F(t,x^i,\dot{x}^i)+\left\langle\bigl(\dfrac{\partial F}{\partial t},\dfrac{\partial F}{\partial x^i},\dfrac{\partial F}{\partial\dot{x}^i}\bigr),\bigl(0,\varepsilon\mathcal{H}^i,\varepsilon\dot{\mathcal{H}^i}\bigr)\right\rangle+\frac{1}{2}{\bigl(0,\varepsilon\mathcal{H}^i,\varepsilon\dot{\mathcal{H}^i}\bigr)}^tH_c\begin{pmatrix}0\\
\varepsilon\mathcal{H}^i\\
\varepsilon\dot{\mathcal{H}^i}\end{pmatrix}\]\\
$$=F\bigl(t_{+0},x^i+\varepsilon\mathcal{H},\dot{x}^i+\varepsilon\dot{\mathcal{H}^i}\bigr)-F(t,x^i,\dot{x}^i)$$$$=\varepsilon\Biggl(\dfrac{\partial F}{\partial x^i}\mathcal{H}^i+\dfrac{\partial F}{\partial\dot{x}^i}\dot{\mathcal{H}^i}\Biggr)+\underset{\overset{\parallel}{0}}{\underbrace{\varepsilon^2(0,\mathcal{H}^i,\dot{\mathcal{H}^i})H_c\begin{pmatrix}0\\
\mathcal{H}^i\\
\dot{\mathcal{H}^i}\end{pmatrix}}}$$\\\\
As\'{i} las cosas \[J(\varepsilon)-J(0)=\int_{t_0}^{t_1}\varepsilon\Biggl(\dfrac{\partial F }{\partial x^i}\mathcal{H}^i+\dfrac{\partial F}{\partial\dot{x}^i}\dot{\mathcal{H}^i}\Biggr)dt\]\\\\
\[\dfrac{J(\varepsilon)-J(0)}{\varepsilon}=\int_{t_0}^{t_1}\Biggl(\dfrac{\partial F}{\partial x^i}\mathcal{H}^i+\dfrac{\partial F}{\partial\dot{x}^i}\dot{\mathcal{H}^i}\Biggr)dt=\int_{t_0}^{t_1}\dfrac{\partial F}{\partial x^i}\mathcal{H}^idt+\int_{t_0}^{t_1}\dfrac{\partial F}{\partial\dot{x}^i}\dot{\mathcal{H}^i}dt\]\\
\[\int_{t_o}^{t_1}\dfrac{\partial F}{\partial\dot{x}^i}\dot{\mathcal{H}^i}dt=\dfrac{\partial F}{\partial\dot{x}^i}\mathcal{H}^i\Biggr]_{t_o}^{t_1}-\int_{t_0}^{t_1}\Biggl[\dfrac{d}{dt}\Biggl(\dfrac{\partial F}{\partial\dot{x}^i}\Biggr)\Biggr]\mathcal{H}^idt\]\\
\[=\dfrac{\partial F}{\partial\dot{x}^i}\underset{\overset{\parallel}{0}}{\underbrace{\Biggl(\mathcal{H}^i(t_1)-\mathcal{H}^i(t_0)\Biggr)}}-\int_{t_0}^{t_1}\dfrac{d}{dt}\Biggl(\dfrac{\partial F}{\partial\dot{x}^i}\Biggr)\mathcal{H}^idt\]\\
\[=\int_{t_0}^{t_1}\dfrac{\partial F}{\partial x^i}\mathcal{H}^idt-\int_{t_0}^{t_1}\Biggl[\dfrac{d}{dt}\Biggl(\dfrac{\partial F}{\partial\dot{x}^i}\Biggr)\Biggr]\mathcal{H}^idt\]\\\\
Finalmente \[\dfrac{J(\epsilon)-J(0)}{\varepsilon}=\int_{t_0}^{t_1}\Biggl[\dfrac{\partial F}{\partial x^i}-\dfrac{d}{dt}\Biggl(\dfrac{\partial F}{\partial\dot{x}^i}\Biggr)\Biggr]\mathcal{H}^idt\]\\
\[\lim_{\varepsilon\rightarrow 0}\dfrac{J(\varepsilon)-J(0)}{\varepsilon}=\dfrac{dJ}{d\varepsilon}=\int_{t_0}^{t_1}\Biggl[\dfrac{\partial F}{\partial x^i}-\dfrac{d}{dt}\Biggl(\dfrac{\partial F}{\partial\dot{x}^i}\Biggr)\Biggr]\mathcal{H}^idt=0\]\\\\
Luego, el siguiente conjunto de ecuaciones, se conocen como las Ecuaciones de Euler del C\'{a}lculo Variacional \[\begin{cases}\dfrac{\partial F}{\partial x^1}-\dfrac{d}{dt}\Biggl(\dfrac{\partial F}{\partial\dot{x}^1}\Biggr)=0\\\\
\dfrac{\partial F}{\partial x^2}-\dfrac{d}{dt}\Biggl(\dfrac{\partial F}{\partial\dot{x}^2}\Biggr)=0\\\\
\dfrac{\partial F}{\partial x^3}-\dfrac{d}{dt}\Biggl(\dfrac{\partial F}{\partial\dot{x}^3}\Biggr)=0\end{cases}\]\\
\begin{lem}[\textbf{lema fundamental del C\'{a}lculo de Variaciones}]
Sea $\phi(x)$ una funci\'{o}n continua en $[x_0,x_1].$\\
Si $\forall \eta(x);$ diferenciable con derivadas continuas en $[x_0,x_1]$ tal que $\eta(x_0)=\eta(x_1)$ se cumple que \[\int_{x_0}^{x_1}\eta(x)\phi(x)dx=0,\] entonces $\phi=0.$
\end{lem}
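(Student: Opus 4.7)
The natural approach is contraposition: assuming $\phi$ is not identically zero, I exhibit a particular admissible $\eta$ for which $\int_{x_0}^{x_1}\eta(x)\phi(x)\,dx$ fails to vanish, contradicting the hypothesis. So suppose $\phi(c)\neq 0$ at some $c\in[x_0,x_1]$; after possibly replacing $\phi$ by $-\phi$ (which leaves the hypothesis invariant) we may assume $\phi(c)>0$. By continuity of $\phi$ there exists a subinterval $[a,b]\subset[x_0,x_1]$ with $a<c<b$ on which $\phi(x)>0$ throughout. The problem is thereby reduced to producing a single admissible test function concentrated inside $[a,b]$.

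For this I would take the explicit bump
\[ \eta(x)=\begin{cases} (x-a)^2(b-x)^2 & \text{si } x\in[a,b],\\ 0 & \text{si } x\in[x_0,x_1]\setminus[a,b]. \end{cases} \]
A direct check shows $\eta\geq 0$, $\eta(x_0)=\eta(x_1)=0$, and that both $\eta$ and its derivative $\eta'(x)=2(x-a)(b-x)\bigl[(b-x)-(x-a)\bigr]$ vanish at the endpoints $a$ and $b$. The piecewise definition therefore glues into a function of class $C^1$ on the whole interval $[x_0,x_1]$, so $\eta$ satisfies all the regularity and boundary conditions required by the hypothesis.

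Applying the hypothesis to this specific $\eta$ yields
\[ 0 \;=\; \int_{x_0}^{x_1}\eta(x)\phi(x)\,dx \;=\; \int_a^b (x-a)^2(b-x)^2\,\phi(x)\,dx, \]
but the integrand on the right is continuous and strictly positive on the open interval $(a,b)$, so the integral must be strictly positive, a contradiction. Hence no such $c$ exists, and $\phi\equiv 0$ on $[x_0,x_1]$.

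The only genuine obstacle is the choice of bump: one needs $\eta$ nonnegative, vanishing at the ambient endpoints $x_0,x_1$, and $C^1$ across the boundary of its support $\{a,b\}$. The \emph{squared} factors $(x-a)^2$ and $(b-x)^2$ are essential --- they force $\eta'(a)=\eta'(b)=0$ and so make the piecewise extension by zero globally $C^1$; a naive choice such as $(x-a)(b-x)$ would only produce a $C^0$ function and fail the admissibility requirement. Everything else in the argument is routine continuity and sign-chasing.
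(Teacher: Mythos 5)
Tu propuesta es correcta y sigue esencialmente el mismo camino que la prueba del art\'{i}culo: reducci\'{o}n al absurdo, localizaci\'{o}n de un subintervalo donde $\phi>0$ por continuidad, y construcci\'{o}n de una funci\'{o}n ``bump'' polin\'{o}mica soportada all\'{i} (el art\'{i}culo usa $(x-\xi_0)^4(x-\xi_1)^4$ donde t\'{u} usas cuadrados, lo cual es s\'{o}lo una variante cosm\'{e}tica). De hecho eres m\'{a}s cuidadoso que el texto al justificar expl\'{i}citamente el paso ``sin p\'{e}rdida de generalidad $\phi(c)>0$'' reemplazando $\phi$ por $-\phi$, detalle que el art\'{i}culo omite.
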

\begin{proof}
Razonemos por Reducci\'{o}n al Absurdo.\\\\
Supongamos que $\phi\neq0,$ entonces $\exists\xi\in[x_0,x_1]$ tal que $\phi(\xi)>0.$\\\\
Ahora, como $\phi$ es continua y $\phi(\xi)>0,\hspace{0.2cm}\exists[\xi_0,\xi_1]$ vecindad de $\xi$ tal que $\phi(x)>0,x\in[\xi_0,\xi_1].$\\\\
Definamos ahora \[\begin{diagram}\node{\eta:[x_0,x_1]}\arrow{e,t}\\
\node{\mathbb{R}}\end{diagram}\]
\[\begin{diagram}\node{x}\arrow{e,t}\\
\node{\eta(x)}\end{diagram}\]\\\\
$\begin{cases}=(x-\xi_0)^4(x-\xi_1)^4,&\text{si $x\in[\xi_0,\xi_1]$}\\
=0,\text{en otro caso}
\end{cases}$\\\\
De acuerdo a esto tenemos lo siguiente:\\\\
$\eta(x_0)=0\\
\eta(x_1)=0\\\\
\eta'(x)=4(x-\xi_0)^3(x-\xi_1)^4+4(x-\xi_0)^4(x-\xi_1)^3\\\\
x\in(\xi_0,\xi_1)=4(x-\xi_0)^3(x-\xi_1)^3(x-\xi_1+x-\xi_0)\\\\
=4(x-\xi_0)^3(x-\xi_1)^3\bigl[2x-(\xi_1+\xi_0)\bigr]$\\\\
$\begin{cases}\eta'(\xi_0)=0\\
\eta'(\xi_1)=0\end{cases}\hspace{0.5cm}\star$\\\\
La funci\'{o}n $\eta$ satisface $\star,$ por lo tanto \[\int_{x_0}^{x_1}\eta(x)\phi(x)dx=\int_{\xi_0}^{\xi_1}\eta(x)\phi(x)dx>0\hspace{0.5cm}(\rightarrow\leftarrow)\]\\\\
Luego $\phi=0.$\\\\\\
\end{proof}
Sabido es por todos, que la originalidad y productividad del Suizo, le han otorgado un lugar en la Historia de las Matem\'{a}ticas y en la ciencia en general; la cual ser\'{i}a una utop\'{i}a de no ser gracias a su exagerada Brillantez.\\\\
Euler, hoy 300 a\~{n}os despu\'{e}s de su Natalicio, vive en todos los rincones de las Matem\'{a}ticas y F\'{i}sica; inclusive en la moderna teor\'{i}a de las $\textit{Superstrings}$ le encontramos. Tal teor\'{i}a ha estado evolucionando hacia atr\'{a}s desde su descubrimiento accidental en 1968. Esta es la raz\'{o}n de que la teor\'{i}a parezca tan extra\~{n}a y poco familiar para la mayor\'{i}a de los f\'{i}sicos.\\\\
Ella naci\'{o} casi por casualidad en 1968 cuando dos j\'{o}venes f\'{i}sicos te\'{o}ricos, Gabriel Veneziano y Mahiko Suzuki, estaban hojeando independientemente libros de matem\'{a}ticas, buscando funciones matem\'{a}ticas que describieran las interacciones de part\'{i}culas fuertemente interactivas. Mientras estudiaban en el CERN, tropezaron independientemente con la funci\'{o}n beta de Euler, una funci\'{o}n desarrollada en el siglo XVIII por el Suizo. Se quedaron sorprendidos al descubrir que la funci\'{o}n beta ajustaba casi todas las propiedades requeridas para describir las interacciones fuertes de part\'{i}culas elementales. Hoy, esta funci\'{o}n beta se conoce en f\'{i}sica con el nombre de modelo de Veneziano, que ha inspirado varios miles de art\'{i}culos de investigaci\'{o}n, iniciado una escuela importante en f\'{i}sica y ahora tiene la pretensi\'{o}n de unificar todas las leyes f\'{i}sicas.\\\\
Se llama $\textit{beta de Euler}$ (o integral euleriana de primera especie) a la funci\'{o}n $(p.,q)\mapsto B(p,q),$ continua en su campo de definici\'{o}n, que viene dada por la expresi\'{o}n: \begin{gather}B(p,q)=\int_{\rightarrow 0}^{\rightarrow 1}x^{p-1}(1-x)^{q-1}dx,\hspace{0.5cm}\forall p,q\in]0,+\infty[\end{gather}\\
Mediante el cambio de variable $x\mapsto y=1-x,$ se obtiene que $B(p,q)=B(q,p).$\\
Otras expresiones interesantes de $B(p,q)$ son:\\
\[B(p,q)=\int_{\rightarrow 0}^{+\infty}\dfrac{t^{p-1}}{(1+t)^{p+q}}dt=\int_{\rightarrow 0 }^{1}\dfrac{u^{p-1}+u^{q-1}}{(1+u)^{p+q}}du=2\int_{\rightarrow 0}^{\rightarrow \pi/2}(\sin^{2p-1}\theta)(\cos^{2q-1}\theta)d\theta\]\\
De esta \'{u}ltima se obtiene en particular que $B(1/2,1/2)=\pi.$\\\\
Se verifican las siguientes relaciones de recurrencia:\\\\
$B(p,q+1)=\dfrac{q}{p+q}B(p,q)\hspace{0.5cm}\text{y}\hspace{0.5cm}B(p+1,q)=\dfrac{p}{p+q}B(p,q)$\\\\
con ellas, si se conociera $B(p,q)$ para $p,q\in]0,1],$ se conocer\'{i}a $B(p,q)$ para cualesquiera $p,q>0.$ Como $B(1,1)=1,$ resulta que \[B(m,n)=\dfrac{(m-1)!(n-1)!}{(m+n-1)!},\hspace{0.5cm}\forall m,n\in\mathbb{N}\]\\\\
Las funciones $\Gamma$ y $B$ est\'{a}n ligadas mediante la relaci\'{o}n $$B(p,q)=\dfrac{\Gamma(p)\Gamma(q)}{\Gamma(p+q)}$$\\\\
De esta f\'{o}rmula se desprende que $\Gamma(1/2)=\sqrt{\pi}$ y que $\int_0^{+\infty}e^{-x^2}dx=1/2\sqrt{\pi}.$\\\\
Finalizamos estas notas, con un aforismo Euleriano:\\
\fontfamily{pzc}\selectfont\\
\begin{quote}"Ya que la f\'{a}brica del universo es m\'{a}s que perfecta y es el trabajo de un Creador m\'{a}s que sabio, nada en el universo sucede en el que alguna regla de m\'{a}ximo o m\'{i}nimo no aparezca''.
\end{quote}
%\pdfbookmark[0]{Bibliograf\'{i}a}{bib}

\end{document}